\documentclass[11pt]{amsart}

\usepackage{amsmath, amssymb, amscd, graphicx}
\usepackage{enumerate}
\usepackage{tikz-cd}
\usepackage{subcaption}

\newtheorem{thm}{Theorem}[section]
\newtheorem{conj}[thm]{Conjecture}

\newtheorem{lem}[thm]{Lemma}

\newtheorem*{ques*}{Question}

\newtheorem*{claim*}{Claim}
\theoremstyle{remark}
\newtheorem{example}[thm]{Example}

\newtheorem*{example*}{Example}

\numberwithin{equation}{section}

 \def\wt{{\mathrm{wt}}}

\def\bR{{\mathbb R}}
\def\bZ{{\mathbb Z}}
\newcommand{\norm}[1]{\left| #1 \right|^2}

\title[Tree Lattices and the Neumann-Zagier conjecture]{Tree Lattices and the Neumann-Zagier conjecture}

\author{Joshua Evan Greene, Ahmad Issa, and Duncan McCoy}

\address{Department of Mathematics, Boston College\\ Chestnut Hill, MA 02467, USA}

\email{joshua.greene@bc.edu}

\address{Independent Researcher}

\email{ahmadissa@gmail.com}

\address{Département de mathématiques, UQAM, Montréal, QC H3C 3P8, Canada}

\email{mc\_coy.duncan@uqam.ca}

\begin{document}

\begin{abstract}
    A tree with integer weights on its vertices presents an integral lattice called a tree lattice.
    We prove that if a tree lattice is positive definite, then one of the vertex elements is irreducible, i.e.~determines a facet of its Voronoi polyhedron.
    As a corollary, we settle a 1985 conjecture of Neumann and Zagier: if a tree lattice is isometric to the lattice of integer points in Euclidean space, then the tree contains a vertex of weight one.

\end{abstract}
    
\maketitle
\section{Introduction}

A nonzero element $x$ in a positive definite lattice $L$ is {\em irreducible} if in every expression $x=y+z$ with nonzero elements $y,z\in L$, we have $y \cdot z < 0$.
Irreducible elements play an important role in the theory of positive definite lattices.
For example, they determine the facets of the Voronoi polyhedron of $L$, which consists of the points in $L \otimes \bR$ that are at least as close to the origin as to any other point of $L$.
Irreducible elements are especially important in algebraic graph theory \cite{bacheretal:lattices,GodsilRoyle2001,Greene2013}.
For example, every vertex in a graph that is not a cut-vertex determines an irreducible element of its lattice of integral cuts.

A finite, weighted tree $T$ with vertex set $V$, edge set $E$, and weight function $\wt : V \longrightarrow \bZ$ determines an integral {\em tree lattice} $L(T)$.
It is defined by equipping the free abelian group generated by $V$ with the symmetric, bilinear form whose value on a pair of generators $u,v \in V$ is given by 
\[
u \cdot v = 
\begin{cases}
    \wt(u), & u=v; \\
    -1, & (u,v) \in E; \textup{ and} \\
    0, & u \ne v, (u,v) \notin E.
\end{cases}
\]

\noindent
Motivated by a conjecture of Neumann and Zagier about tree lattices, we prove the following:

\begin{thm}
\label{thm:main}
Every positive definite tree lattice contains an irreducible vertex.    
\end{thm}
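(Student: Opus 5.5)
The plan rests on the characterization of irreducibility as a strict Voronoi condition. For nonzero $x\in L$, a splitting $x=y+z$ has $y\cdot z = x\cdot y - y\cdot y$. Hence $x$ is irreducible if and only if
\[
x\cdot y < y\cdot y \quad \text{for all } y\in L\setminus\{0,x\}.
\]
For a vertex $v$ and $y=\sum_u a_u u$ this condition is explicit:
\[
v\cdot y = a_v\,\wt(v) - \sum_{(u,v)\in E} a_u,
\]
and $y\cdot y$ is a quadratic form supported on the tree. I would first record two elementary facts. Positive definiteness forces $\wt(u)\ge 1$ for every vertex. It also forces positive definiteness of every subtree lattice, i.e. of every branch obtained by deleting a vertex.

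The argument will be by induction on $|V|$. The base case $|V|=1$ is trivial, since $v=\pm$ a generator of a rank-one lattice is irreducible.

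For the inductive step I would root $T$ at a vertex $r$. Let $T_1,\dots,T_k$ be the branches at $r$, with $c_i$ the neighbor of $r$ in $T_i$. Every $y$ decomposes as $y=a r + \sum_i b_i$ with $b_i\in L(T_i)$. The branches are mutually orthogonal, and each meets $r$ only through $-1\cdot(\text{coefficient of } c_i)$. Consequently $y\cdot y$ and $v\cdot y$ split as sums over branches plus correction terms involving $a$ and the coefficients of the $c_i$.

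This suggests proving a stronger, rooted statement that is suitable for induction. For a positive definite rooted tree $(T,r)$, I would aim to show there is a vertex $v\neq r$ (when $|V|>1$) that is irreducible in $L(T)$, and moreover irreducible in a way robust to modifying the weight of $r$, or to attaching further trees at $r$. The idea is to choose $v$ deep in one branch. A candidate is a vertex maximizing $\wt(v)-\deg(v)$, or a leaf of some extremal type. Then, if $v\cdot y\ge y\cdot y$ for some $y\notin\{0,v\}$, one projects $y$ to the branch containing $v$, i.e. zeroes out the other coordinates. The goal is to show that this projection does not decrease $v\cdot y - y\cdot y$ beyond a controlled cross term $\pm a\cdot(\text{coefficient of }c_i)$.

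The next step absorbs that cross term by an explicit modification. Replacing the root by a vertex of adjusted weight (a ``continued fraction'' type contraction) reduces the problem to the smaller positive definite tree $T_i$. The inductive hypothesis applied there then yields the contradiction.

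The main obstacle is choosing the correct strengthened inductive statement and the correct vertex. A naive choice fails already on short paths: a leaf of weight $1$ need not be irreducible in general. The cross terms at the attaching vertex also have a sign that depends on the signs of the coefficients $a$ and $a_{c_i}$.

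My first attempt at this obstacle is a sign-switching normalization. Vertices of a tree can be re-signed, since the tree is bipartite, so that all edge products become $+1$ or all become $-1$ as convenient. Together with replacing $y$ by its coordinatewise ``absolute value'' relative to the bipartition, this should show that a reducing $y$ can be taken to have coefficients of constant sign pattern. That reduces the check to nonnegative integer vectors supported on a subtree containing $v$. Among those, a minimal-support counterexample should be forced to be a path from $v$ along which the weights are tight, namely $\wt=2$ interior vertices. I expect to rule this out by choosing $v$ as an endpoint of a maximal such configuration, using positive definiteness of the whole tree.
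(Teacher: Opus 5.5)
There is a genuine gap. The proposal never commits to a vertex, or to an inductive statement that is actually true, and the ones it floats fail on small examples.

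\emph{The rooted strengthening is false.} You propose that for every positive definite rooted tree $(T,r)$ with $|V|>1$, some vertex $v\neq r$ is irreducible. Take the two-vertex tree with vertices $r,v$ joined by an edge, $\wt(r)=1$ and $\wt(v)=2$. Then $L(T)\cong I_2$ and $v=(r+v)+(-r)$ with $(r+v)\cdot(-r)=0$, so $v$ is reducible. Figure~\ref{fig:7123}(A) rooted at its center fails in the same way. In your induction the roots of the branches are forced to be the attaching vertices $c_i$, so you cannot avoid this by choosing roots.

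\emph{The candidate vertices fail.} In Figure~\ref{fig:7123}(A) the unique irreducible vertex is the center, which is not a leaf and \emph{minimizes} $\wt(v)-\deg(v)$. Maximizing $\wt(v)-\deg(v)$ instead selects the weight-$7$ leaf, which has norm $7$ in $I_4$ and is therefore reducible. In (B) it selects the weight-$3$ leaf, also reducible.

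\emph{The contraction leaves the class you induct on.} The continued-fraction contraction generally produces a non-integral weight. An inductive hypothesis about integral tree lattices then does not apply to the contracted tree.

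\emph{The path structure is false.} Consider the star with center $c$ of weight $2$, leaves $x_1,x_2,x_3$ of weight $2$, and a fourth leaf $w$ of weight $3$; it is positive definite of determinant $4$. The vertex $w$ is reducible via $y=-(x_1+x_2+x_3+2c)$, since $y\cdot w=2=|y|^2$. Yet a direct check shows that no element reducing $w$ is supported on a path, so ruling out paths cannot establish irreducibility.

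Your sign normalization, by contrast, is salvageable. Replace $y$ by $v-y$ if $a_v<0$, then use $-\sum|a_u|u$ if $a_v=0$ and $\sum|a_u|u$ if $a_v\ge 1$. But that is not where the difficulty lies.

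The missing idea is a \emph{global} choice of vertex combined with an induction in the dual lattices.
\begin{enumerate}[(i)]
\item The paper takes $v$ maximizing $|v^*|^2$, the largest diagonal entry of the inverse Gram matrix. The coefficients $\nu_x=v^*\cdot x^*$ of $v^*$ are then positive (Lemma~\ref{lem:dual_positivity}) and strictly less than $\nu_v$ for $x\neq v$.
\item Rooting at $v$, each branch $T_w$ carries $\lambda^w\in L(T_w)^*$, the projection of $-v$, with coefficients $\nu_x/\nu_v<1$.
\item Lemma~\ref{lem:lengthbound} shows such an element is irreducible in $L(T_w)^*$, via the bound \eqref{eq:inductivebound}. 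That bound is proved by induction over rooted subtrees with real coefficients. Working in $L(T_w)^*$ rather than in integral tree lattices is what makes a contraction-style induction legitimate.
\item The orthogonal splitting $v=\overline{v}-\sum_w\lambda^w$, with $\overline{v}\in\bR v^*$, transfers irreducibility of the $\lambda^w$ to $v$ (Lemma~\ref{lem:irred_criterion}).
\end{enumerate}
Without an analogue of this choice of $v$ and this dual-lattice bound, your outline has no mechanism that actually produces an irreducible vertex.
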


\noindent
Theorem~\ref{thm:main} follows from a sharper statement.
Recall that to every vertex $v \in V$ there corresponds a dual element $v^* \in L(T)^* \subset L(T) \otimes \bR$ defined by the property that 
$v^* \cdot v = 1$ and $v^* \cdot u = 0$ for all $u \in V$, $u \ne v$.

\begin{thm}
\label{thm:longest_dual}
    Suppose that $L(T)$ is a positive definite tree lattice.
    Let $v$ be a vertex such that $|v^*|^2\geq |u^*|^2$ for every vertex $u \in V$.
    Then $v$ is irreducible.
\end{thm}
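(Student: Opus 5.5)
The plan is to argue by contradiction through the standard characterization of irreducibility: $v$ is irreducible if and only if $\pm v$ are the only shortest elements of the coset $v+2L(T)$. Equivalently, $w\cdot(v+w)>0$ for every $w\in L(T)\setminus\{0,-v\}$. So suppose $v=x+y$ with $x,y\neq 0$ and $x\cdot y\ge 0$. Write $x=\sum_u a_u u$ and $y=\sum_u b_u u$. Pairing with dual elements gives $a_u=x\cdot u^*$ and $b_u=y\cdot u^*$, so $a_v+b_v=1$ and $a_u+b_u=0$ for $u\neq v$. Swapping $x$ and $y$ if necessary, we may assume $a_v\ge 1$ and $b_v\le 0$.

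The first structural input is that the Gram matrix $G$ of $L(T)$ is positive definite with nonpositive off-diagonal entries and connected support, so it is a Stieltjes matrix. Hence $G^{-1}$ has strictly positive entries, i.e.\ $u^*\cdot w^*>0$ for all vertices $u,w$. For a tree one can say more. By the cofactor formula, $u^*\cdot w^*=\det(G_{T\setminus P})/\det G$, where $P$ is the path from $u$ to $w$ and $G_{T\setminus P}$ is the Gram matrix of the subforest $T\setminus P$. In particular $|u^*|^2=\det(G_{T\setminus u})/\det G$. I would use this formula to compare $|u^*|^2$ with $|v^*|^2$ as $u$ moves along an edge: moving from $v$ to a neighbour changes the relevant determinant by a quantity controlled by the branches of $T$ at $v$.

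The core step is to turn the bad decomposition into a vertex $u$ with $|u^*|^2>|v^*|^2$, contradicting the maximality hypothesis. First I would normalize $(x,y)$. Among all decompositions with $x\cdot y\ge 0$, choose one minimizing $|x|^2$, or the $\ell^1$-norm of the coefficient vector. Then I would show by local moves that the coefficients can be assumed to have constant sign on each branch of $T\setminus v$, using bipartiteness: the sign-twisted change of basis $u\mapsto \pm u$ flips every edge product to $+1$ and lets one fold coefficients along edges. After this normalization, $y$ should be supported on a single branch $B$ at $v$, together with possibly $v$ itself.

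Next I would compute $x\cdot y=v\cdot y-|y|^2\ge 0$. Expanding $v\cdot y$ gives $\wt(v)b_v-b_{n}$, where $n$ is the neighbour of $v$ in $B$. This gives an inequality that says $y$ is \emph{short} relative to its coefficients at $v$ and $n$. Pairing that inequality against the dual elements $v^*$ and $n^*$, and using the determinant formulas above, should yield $|n^*|^2>|v^*|^2$, or more generally $|u^*|^2>|v^*|^2$ for some $u\in B$.

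I expect the main obstacle to be the normalization step: reducing an arbitrary decomposition to one supported on a single branch with sign-coherent coefficients, without destroying $x\cdot y\ge 0$. If the folding argument fails, my fallback is to induct on $|V|$. I would remove a leaf $\ell\neq v$ and pass to the positive definite sublattice $L(T\setminus\ell)$, or absorb $\ell$ via the continued-fraction (Schur complement) modification of its neighbour's weight. I would then check that both the maximality of $|v^*|^2$ and the failure of irreducibility descend to the smaller tree, leaving the one- and two-vertex trees as base cases.
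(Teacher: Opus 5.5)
\textbf{There is a genuine gap: the step that carries the whole theorem is missing.} Your setup is sound: the reformulation of irreducibility, $a_v+b_v=1$ and $a_u+b_u=0$ for $u\neq v$, positivity of $G^{-1}$, and the path-cofactor formula are all correct. But the sentence ``pairing that inequality against the dual elements $v^*$ and $n^*$ \dots\ should yield $|n^*|^2>|v^*|^2$'' is not an argument. Nothing in the proposal links a short vector on a branch (a $y\neq 0$ with $|y|^2\le v\cdot y$) to the dual norms. Two ideas are needed.

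First, put the hypothesis in usable form. Since $v^*=\sum_x (v^*\cdot x^*)\,x$ and $0<|v^*-x^*|^2\le 2|v^*|^2-2\,v^*\cdot x^*$, maximality says that every coefficient $\rho_x$ of $v^*$ with $x\neq v$ is strictly less than $\rho_v=|v^*|^2$.

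Second, you need an estimate that excludes short vectors on an entire branch, and this is inherently inductive. Root $T$ at $v$. The dual of a neighbour $n$ in $L(T_n)$ is $\lambda^n=\rho_v^{-1}\sum_{x\in T_n}\rho_x\,x$, and all its coefficients lie in $(0,1)$. The paper shows that then $|\xi|^2\ge\xi\cdot\lambda^n$ for every $\xi\in L(T_n)^*$, with equality only for $\xi\in\{0,\lambda^n\}$. The proof inducts over the subtrees $T_w\subseteq T_n$ on the sharper bound $|\xi|^2\ge(\rho_u/\rho_v)\,|\xi_w|$ for $\xi\in L(T_w)^*$, where $u$ is the parent of $w$ and $\xi_w$ is the coefficient at $w$. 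It uses the orthogonal splitting $L(T_w)\otimes\bR=\bR\lambda^w\oplus\bigoplus_{w'}L(T_{w'})\otimes\bR$ over the children $w'$ of $w$ (with $\lambda^w$ the dual of $w$ in $L(T_w)$), together with the integrality estimate $|t-m|\ge|t|(1-|t|)$ for $m=\xi\cdot w\in\bZ$. In your single-branch situation with $b_v=0$ this finishes at once: $\xi=-y\in L(T_n)$ satisfies $|\xi|^2\le\xi\cdot\lambda^n$, which forces $\xi=\lambda^n\notin L(T_n)$.

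\textbf{Your normalization does not reach that situation in general.} Sign-coherence is in fact easy: $x\cdot y\ge0$ is equivalent to $|2y-v|^2\le|v|^2$, and replacing the coefficients of $2y-v$ by their absolute values cannot increase the norm, because the off-diagonal Gram entries are $\le0$. When $b_v=0$ you may also discard all but one branch, since then $x\cdot y=\sum_B\bigl(v\cdot y_B-|y_B|^2\bigr)$. But for $y=b_v v+\sum_B y_B$ with $b_v\neq0$ one has
$x\cdot y=b_v(1-b_v)\wt(v)+(1-2b_v)\sum_B v\cdot y_B-\sum_B|y_B|^2$.
Here the negative term $b_v(1-b_v)\wt(v)$ is shared among branches, and positive contributions from other branches cannot simply be dropped. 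The correct branchwise pieces are the projections $y_B-b_v\lambda^{n_B}$, which lie in $L(T_{n_B})^*$ but not in $L(T_{n_B})$. This is why the branch estimate must be proved in the dual lattice. It is also why the paper writes $y=b_v\bar v+\sum_n y^n$ with $\bar v\in\bR v^*$ and $y^n\in L(T_n)^*$: then $x\cdot y=b_v(1-b_v)|\bar v|^2+\sum_n x^n\cdot y^n$, and every term is $\le0$ once each $\lambda^n$ is known to be irreducible in $L(T_n)^*$.

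\textbf{Your fallback induction does not repair this.}
\begin{enumerate}
\item A reducing pair in $L(T)$ may use the deleted leaf, so reducibility need not descend to $L(T\setminus\ell)$.
\item Maximality need not descend either. For the star with central weight $3$ and three leaves of weight $2$, all four dual norms equal $2/3$; after deleting a leaf, $|v^*|^2=1/2$ while each remaining leaf has dual norm $5/8$.
\item The Schur-complement move leaves the class of integrally weighted trees. Moreover, a reducing pair $v=x+y$ need not project to a reducing pair, since projecting to $\ell^\perp$ subtracts $(x\cdot\ell)(y\cdot\ell)/\wt(\ell)$ from $x\cdot y$.
\end{enumerate}
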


\noindent
The inverse of the Gram matrix of the basis $V \subset L(T)$ is the Gram matrix of the dual basis $V^* \subset L(T)^*$.
Its diagonal displays the values $|v^*|^2$ for $v \in V$.
Thus, an irreducible vertex in $L(T)$ is identified by the largest diagonal entry in the inverse of the Gram matrix.

Let $I_n$ denote the lattice of integer points in Euclidean $\bR^n$.
Since every irreducible element in $I_n$ has self-pairing one, Theorem~\ref{thm:main} implies the aforementioned conjecture \cite{NeumannZagier1985,Neumann1989}:

\begin{conj} [Neumann-Zagier]
\label{conj: nz}
Let $T$ be a weighted tree such that $L(T)$ is isometric to $I_n$. Then there exists a vertex $v \in V$ with weight $\wt(v) = 1$.
\end{conj}

Neumann and Zagier were motivated by a problem in low-dimensional topology about the 3-dimensional homology cobordism group $\Theta^3_\bZ$.
Let $Y$ denote a Seifert-fibered integer homology 3-sphere with base space the 2-sphere.
There are two related obstructions to $[Y] = 0 \in \Theta^3_\bZ$ coming from gauge theory.
They both involve tree lattices, which arise as follows.
Up to orientation, $Y$ bounds a smooth plumbed 4-manifold $X$ for which the associated intersection pairing $(H_2(X),Q_X)$ is a positive definite tree lattice $L(T)$.
The tree $T$ in this case is star-shaped, meaning that it has a single central vertex $v$ of degree $\deg(v) \ge 3$.

Suppose that $[Y] = 0 \in \Theta^3_\bZ$, so $Y$ bounds a smooth integer homology 4-ball $W$.
Then $X \cup (-W)$ is a smooth, closed, positive definite 4-manifold.
Donaldson's Theorem implies that its intersection pairing is isometric $I_n$, $n=b_2(X)$ \cite{Donaldson1987}.
It follows that $L(T)$ is isometric to $I_n$, which places a stringent restriction on $Y$.
This can happen in interesting ways, as in Figure~\ref{fig:7123}.
Fintushel and Stern defined an invariant $R(Y) \in \bZ$ with the property that $R(Y) \le 0$ if $[Y] = 0 \in \Theta^3_\bZ$ \cite{FintushelStern1985}.
Neumann and Zagier showed that $R(Y) = 2 \wt(v)-3$, where $v$ denotes the central vertex in $T$ \cite{NeumannZagier1985}.
Therefore, $[Y]=0 \in \Theta^3_\bZ$ implies $\wt(v) = 1$.
Neumann and Zagier went on to conjecture that the Donaldson obstruction implies the Fintushel-Stern obstruction: if $T$ is star-shaped with central vertex $v$, if every vertex $u \ne v$ has weight $\wt(u) \ge 2$, and if $L(T)$ is isometric to $I_n$, then $\wt(v) = 1$.
This conjecture was settled by Issa and McCoy using techniques very specific to this class of trees \cite{IssaMcCoy2020}.
Sensing the phenomenon extended beyond star-shaped trees, Neumann and Zagier went on to pose Conjecture \ref{conj: nz}.

 \begin{figure}
     \centering
     \begin{subfigure}{0.45\textwidth}
 \centering
 \begin{tikzpicture}[
     dot/.style={
         circle,
         fill=black,
         inner sep=1.8pt
     },
     every edge/.style={draw, thick}
 ]

 \node[dot,label=above:$1$] (c) at (0,0) {};

 \node[dot,label=below:$7$]  (l1) at (-1.5,-1.5) {};
 \node[dot,label=below:$2$] (l2) at (0,-1.5) {};
 \node[dot,label=below:$3$] (l3) at (1.5,-1.5) {};

 \draw (c) -- (l1);
 \draw (c) -- (l2);
 \draw (c) -- (l3);

 \end{tikzpicture}
 \caption{}
 \end{subfigure}
 \hfill
     \begin{subfigure}{0.45\textwidth}
     \centering
     \begin{tikzpicture}[
     dot/.style={
         circle,
         fill=black,
         inner sep=1.8pt
     },
     every edge/.style={draw, thick}
 ]

 \node[dot,label=above:$2$] (c) at (0,0) {};

 \node[dot,label=below:$1$]  (l1) at (-1.5,-1.5) {};
 \node[dot,label=below:$2$] (l2) at (0,-1.5) {};
 \node[dot,label=below:$3$] (l3) at (1.5,-1.5) {};

 \draw (c) -- (l1);
 \draw (c) -- (l2);
 \draw (c) -- (l3);

 \end{tikzpicture}
 \caption{}
 \end{subfigure}
 \caption{Two weighted trees presenting the diagonal lattice $I_4$. In both cases, there is a unique irreducible vertex and a unique bad vertex $v$, meaning $\wt(v) < \deg(v)$. In (A) the bad vertex is irreducible, and in (B) it is not.}
     \label{fig:7123}
 \end{figure}

Some examples serve to underscore the optimality of Theorem~\ref{thm:main}.

\begin{example} Theorem~\ref{thm:main} does not generalize to positive definite graph lattices.
Let $G$ be the weighted graph with two vertices $u$ and $v$, three edges between $u$ and $v$, and weights $\wt(u) = 5$ and $\wt(v) = 2$.
Then $L(G)$ is isometric to $I_2 = \langle e_1, e_2 \rangle$ by the embedding $u \longmapsto e_1 + 2e_2$, $v \longmapsto -e_1-e_2$.
However, both vertices of $G$ are reducible in $L(G)$.
\end{example}

\begin{example}  Likewise, Theorem~\ref{thm:main} does not generalize to the assertion that there exists a vertex which attains the minimum self-pairing of a nonzero element in $L(T).$
For the weighted tree $T$ of Figure~\ref{fig:e8ish}, the tree lattice $L(T)$ is isometric to $E_8 \oplus I_1$, embedding the vertices of the  $E_8$-subtree into the $E_8$-summand, and embedding the leaf of weight 31 as the negative of the dual to the central vertex in $E_8$, plus a unit vector in $I_1$.
The vertices of weight 2 are all irreducible.
There is no vertex of weight 1 in $T$, even though $L(T)$ contains a unit vector.
\end{example}
\begin{figure}
    \centering
\begin{tikzpicture}[
    dot/.style={
        circle,
        fill=black,
        inner sep=1.8pt
    },
    every edge/.style={draw, thick}
]

\node[dot,label=above:$2$] (v1) at (0,0) {};
\node[dot,label=above:$2$] (v2) at (1.5,0) {};
\node[dot,label=above:$2$] (v3) at (3,0) {};
\node[dot,label=above:$2$] (v4) at (4.5,0) {};
\node[dot,label=above:$2$] (v5) at (6,0) {};
\node[dot,label=above:$2$] (v6) at (7.5,0) {};
\node[dot,label=above:$2$] (v7) at (9,0) {};

\node[dot,label=left:$2$] (v8) at (2.25,-1.5) {};

\node[dot,label=right:$31$] (v9) at (3.75,-1.5) {};

\draw (v1) -- (v2);
\draw (v2) -- (v3);
\draw (v3) -- (v4);
\draw (v4) -- (v5);
\draw (v5) -- (v6);
\draw (v6) -- (v7);

\draw (v3) -- (v8);
\draw (v3) -- (v9);

\end{tikzpicture}
    \caption{A weighted tree presenting $E_8\oplus I_1$ which does not contain any vertices of weight one.}
    \label{fig:e8ish}
\end{figure}
\subsection*{AI Methodology.}
This paper began as an attempt on Conjecture~\ref{conj: nz} in 2020.
At that time, JEG and DM established it under the hypothesis that $T$ has a single bad vertex, i.e.~a single vertex $v$ for which the weight $\wt(v)$ is smaller than the vertex degree $\deg(v)$.
In 2023, JEG and DM made a renewed attempt and improved the result to show that if $L(T)$ is positive definite and $T$ contains a single bad vertex, then $L(T)$ contains an irreducible vertex.
In 2026, JEG and DM submitted this result to the First Proof AI benchmark test \cite{firstproof}.
Soon thereafter, JEG and DM were contacted by AI (the second author), who had used AI (artificial intelligence) to prove Theorem~\ref{thm:main}.
We all three then set about digesting the AI argument.
The AI proof contained two main innovations: the use of the Perron-Frobenius theorem to identify a vertex, and an inductive proof akin to Lemma~\ref{lem:lengthbound} to establish its irreducibility.
While the use of Perron-Frobenius appeared to be a stroke of genius, it turned out to be a red herring, and we simplified the proof to obtain the stronger and more natural assertion of Theorem~\ref{thm:longest_dual}.
Lemma~\ref{lem:lengthbound} persisted as an essential linchpin in the argument, although the opaque inductive assertion the AI had suggested became the more transparent \eqref{eq:inductivebound}.

The initial AI proof of Theorem~\ref{thm:main} used ChatGPT 5.5 Pro and Opus 4.8. JEG used ChatGPT for Academic Researchers.
We chucked AI efforts to write this paper in favor of our own, although we engaged it for proofreading.

\subsection*{Acknowledgements}
DM is partially supported by NSERC grant RGPIN-2020-05491 and a Canada Research Chair.
JEG is partially supported by NSF Award DMS-2304856.

\section{Preparation}

We assume henceforth that $T$ is a weighted tree and $L(T)$ is positive definite.
This allows us to extend the pairing on $L(T)$ to a positive definite pairing on $L(T) \otimes \bR$ and to equate $L(T) \otimes \bR$ with its dual.
In particular, the dual $L(T)^*$ to $L(T)$ isometrically embeds in $L(T) \otimes \bR$ as the set of points with integral pairings against the elements of $L(T)$.
The lattice $L(T)$ has a distinguished basis $V$, and we let $V^*$ denote the dual basis of the dual lattice $L(T)^*$.
We use Latin letters $r$, $v$, etc. to denote vertices of $T$, and we write $r^*,v^*$, etc. to refer to their dual elements.
We also use Latin letters to refer more generally to elements of $L(T)$.
Besides the notation reserved for dual elements to vertices, we use Greek letters $\lambda, \xi, \eta$ to refer to elements of the dual lattice $L(T)^*$ and more generally $L(T) \otimes \bR$.
We write an element like $\xi \in L(T) \otimes \bR$ in terms of the basis $V$ of $L(T)\otimes \bR$ in coordinates as follows:
\[
\xi = \sum_{x \in V} \xi_x \, x.
\]
In the special case that $\xi = r^*, v^*$, we approximate this notation by using coefficients $\rho_x, \nu_x$.

\begin{lem}\label{lem:dual_positivity}
    Let $L(T)$ be a positive definite tree lattice.
    Let
    \[
    v^*=\sum_{x\in V} \nu_x x
    \]
    be the dual to a vertex $v$. Then $\nu_x>0$ for all $x \in V$.
\end{lem}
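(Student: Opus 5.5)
The Gram matrix of $V$ is $Q = D - A$, where $D$ is the diagonal matrix of weights and $A$ is the adjacency matrix of $T$. Since $v^* \cdot u = \delta_{uv}$ for all $u \in V$, the coefficient vector $\nu = (\nu_x)$ satisfies $Q\nu = e_v$, so $\nu$ is the $v$-th column of $Q^{-1}$. The statement is therefore that a positive definite matrix whose off-diagonal entries are $\le 0$ (a nonsingular M-matrix) has an entrywise positive inverse when its underlying graph is connected. I would prove this directly and lattice-theoretically, using only positive definiteness and the tree structure.

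\textbf{Step 1: nonnegativity.} Let $\xi = v^*$, and let $|\xi| = \sum_x |\nu_x|\, x$. Every off-diagonal pairing $x\cdot y$ is $\le 0$, so
\[
|\,|\xi|\,|^2 = \sum_x \wt(x)\nu_x^2 + \sum_{x\neq y} (x\cdot y)\,|\nu_x||\nu_y| \le |\xi|^2 .
\]
Moreover $|\xi|\cdot v = |\nu_v|\,\wt(v) - \sum_{u \sim v} |\nu_u| \ge \nu_v \wt(v) - \sum_{u\sim v}\nu_u = 1$.

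Now $v^*$ is the unique minimizer of $|\eta|^2$ over the affine subspace $\{\eta : \eta\cdot v = 1\}$. Indeed, $v^*$ is orthogonal to the hyperplane $\{\eta:\eta\cdot v = 0\}$, being orthogonal to all $u\ne v$. Rescale $\eta = |\xi|/(|\xi|\cdot v)$, which lies in that affine subspace. Then
\[
|\eta|^2 \le |\xi|^2/(|\xi|\cdot v)^2 \le |\xi|^2 ,
\]
so $\eta = v^*$ by uniqueness. Both inequalities are then equalities, which forces $|\xi|\cdot v = 1$ and $|\xi| = \xi$. Hence $\nu_x \ge 0$ for all $x$.

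\textbf{Step 2: strict positivity.} First, $\nu_v = |v^*|^2 > 0$. Suppose some $\nu_x = 0$. Since $T$ is connected and $\nu_v > 0$, we can choose such an $x$ adjacent to a vertex $y$ with $\nu_y > 0$. Note $x \ne v$, so $v^*\cdot x = 0$, which gives
\[
0 = v^*\cdot x = \wt(x)\nu_x - \sum_{u\sim x}\nu_u = -\sum_{u\sim x}\nu_u \le -\nu_y < 0,
\]
a contradiction. Hence $\nu_x > 0$ for all $x \in V$.

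The only delicate point is Step 1, namely the minimizing characterization of $v^*$ and the fact that passing to absolute values cannot increase length. Both follow from positive definiteness together with the sign pattern $x \cdot y \le 0$ for $x \ne y$; the tree structure enters only through connectivity.
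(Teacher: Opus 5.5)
Your Step 2 matches the paper's strict-positivity argument and is fine. Step 1, however, has a genuine gap: both of its key claims confuse the pairing $\eta\cdot v$ with the $v$-coordinate $\eta_v=\eta\cdot v^*$.

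\emph{The variational characterization is false.} The span of $V\setminus\{v\}$ is the hyperplane $\{\eta:\eta_v=0\}$, not $\{\eta:\eta\cdot v=0\}=v^\perp$, since a neighbor $u$ of $v$ has $u\cdot v=-1$. So orthogonality of $v^*$ to the $u\neq v$ tells you nothing about $v^\perp$. In fact the minimizer of $|\eta|^2$ on $\{\eta:\eta\cdot v=1\}$ is $v/\wt(v)$, not $v^*$. For two adjacent vertices $u,v$ of weight $2$, we have $v^*=\tfrac13u+\tfrac23v$ with $|v^*|^2=\tfrac23$, while $v/2$ satisfies $(v/2)\cdot v=1$ and $|v/2|^2=\tfrac12$. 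Hence ``$\eta=v^*$ by uniqueness'' does not follow.

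\emph{The inequality $\hat\xi\cdot v\ge 1$ is not justified.} Write $\hat\xi=\sum_x|\nu_x|\,x$ for your coordinatewise absolute value. The neighbor terms satisfy $-|\nu_u|\le-\nu_u$, which is the wrong direction. Since $\nu_v>0$, the termwise comparison actually gives $\hat\xi\cdot v\le \xi\cdot v=1$. Equality holds exactly when every neighbor of $v$ has $\nu_u\ge 0$, which is part of what you are trying to prove.

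The idea survives if you constrain the $v$-coordinate instead. Let $H=\mathrm{span}(V\setminus\{v\})=\{\eta:\eta_v=0\}$. Since $v^*\perp H$, we have $|v^*+h|^2=|v^*|^2+|h|^2$ for $h\in H$, so $v^*$ is the unique minimizer of $|\cdot|^2$ on $v^*+H=\{\eta:\eta_v=\nu_v\}$. Because $\nu_v=|v^*|^2>0$, the element $\hat\xi$ lies in $v^*+H$. Your estimate $|\hat\xi|^2\le|\xi|^2$ then forces $\hat\xi=\xi$, i.e.\ $\nu_x\ge 0$ for all $x$, and no rescaling is needed.

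With this repair you have a genuinely different proof of nonnegativity: the standard M-matrix (Dirichlet-form) fact that taking absolute values does not increase energy. The paper instead splits $v^*=p+n$ by sign and uses $|n|^2>0$ to find $x$ with $\nu_x<0$ and $x\cdot n<0$. It then gets $x\cdot v^*=x\cdot p+x\cdot n<0$, contradicting $x\cdot v^*\in\{0,1\}$. Both arguments use only positive definiteness and the nonpositivity of off-diagonal pairings.
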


\noindent

\begin{proof}
    Define
    \[
    P=\{x\in V \, | \, \nu_x>0\}, \quad N = \{ x \in V \, | \, \nu_x < 0\},
    \]
    and define
    \[
    p= \sum_{x\in P}\nu_x x, \quad n = \sum_{x \in N} \nu_x x.
    \]
    We seek to show that $P = V$.
    By construction, $v^*=p+n$.
    Suppose for a contradiction that $N \ne \emptyset$.
    Then $n\neq 0$, and since $L(T)$ is positive definite, we have $|n|^2>0$.
    Thus, in the expansion of $|n|^2 = n \cdot \left( \sum_{x \in N} \nu_x x \right)$, there is at least one positive term $n \cdot (\nu_x x)$, $x \in N$.
    Since $\nu_x < 0$, we have $x \cdot n < 0$, and since $x \not \in P$, we have $x \cdot p\leq 0$.
    It follows that $0 \le x \cdot v^* = x \cdot (p+n)<0$, a contradiction.
    Thus $N = \emptyset$.
    Suppose for one more contradiction that there exists a vertex $y$ such that $\nu_y = 0$.
    Since $N = \emptyset$, we can select some such $y$ that is adjacent to a vertex $x \in P$.
    Then $0 \le y \cdot v^* = y \cdot p \le - \nu_x < 0$, a contradiction.
    It follows that $P = V$, as required.
\end{proof}

If $r$ is a distinguished {\em root} of $T$, then for each vertex $v \in V$, there is a unique weighted descendent subtree $T_v$ rooted at $v$: its vertex set consists of all vertices $w$ such that $v$ appears along the unique path in $T$ from $r$ to $w$.
The lattice $L(T_v)$ is a positive definite sublattice of $L(T)$, for all $v \in V$.
The dual lattice $L(T_v)^*$ sits within $L(T_v) \otimes \bR$ and in turn within $L(T) \otimes \bR$.

The inner product on $L(T) \otimes \bR$ furnishes orthogonal projections onto its subspaces $L(T_v) \otimes \bR$.
Given $\xi \in L(T) \otimes \bR$, we let $\xi^v$ denote its orthogonal projection onto $L(T_v) \otimes \bR$.
Since $\xi \cdot \eta = \xi^v \cdot \eta$ for all $\eta \in L(T_v) \otimes \bR$, we infer that $\xi^v \in L(T_v)^*$ if $\xi \in L(T)^*$.
We use the notation $\lambda^v \in L(T_v)^* \subset L(T_v) \otimes \bR \subset L(T) \otimes \bR$
for the dual element to $v$ in the basis $V(T_v)$ of $L(T_v)$.
It is the orthogonal projection of $v^*$ to $L(T_v)^*$ (the notation sidesteps the unsightly alternative $(v^*)^v$).
We stress that $\lambda^v$ is supported on $V(T_v)$, whereas $v^*$ has a nonzero coefficient on each vertex in $T$, by Lemma~\ref{lem:dual_positivity}.
This means that the elements $\lambda^v$ and $v^*$ are distinct, except in the special case that $v = r$.
Also, if $u \in V$ denotes the {\em parent} of $v$ for $v \ne r$, i.e.~the unique neighbor of $v$ not in $T_v$, then $\lambda^v$ is the orthogonal projection of $-u$ to $L(T_v) \otimes \bR$.
This observation leads to a useful expression for $\lambda^v$ in terms of $r^*$:

\begin{lem}
    \label{lem:expression}
Writing
\[
r^* =   \sum_{x \in V} \rho_x \, x,
\]
we have 
\begin{equation}
\label{eq:child_dual}
\lambda^v =\sum_{x\in V(T_v)}\frac{\rho_x}{\rho_u} \, x
\end{equation}
for every $v \in V$, where $u$ denotes the parent of $v$, if $v \ne r$, and where $\rho_u = 1$, if $v = r$. Moreover,
\begin{equation}
    \label{e:self-pairing}
|\lambda^v|^2 = \frac{\rho_v}{\rho_u}.
\end{equation}

\end{lem}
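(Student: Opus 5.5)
Let $u$ be the parent of $v$ (with $v\ne r$; the case $v=r$ is immediate, since then $\lambda^r=r^*$ and $|r^*|^2=r^*\cdot r^*=\rho_r$, which is \eqref{e:self-pairing} with $\rho_u=1$). The plan is to write down the candidate
\[
\eta=\sum_{x\in V(T_v)}\frac{\rho_x}{\rho_u}\,x,
\]
which is well defined because $\rho_u>0$ by Lemma~\ref{lem:dual_positivity}. We then check directly that $\eta$ satisfies the defining properties of $\lambda^v$. These are:
\begin{enumerate}[(i)]
\item $\eta$ lies in $L(T_v)\otimes\bR$;
\item $\eta\cdot v=1$;
\item $\eta\cdot x=0$ for every $x\in V(T_v)$ with $x\ne v$.
\end{enumerate}
Since $L(T_v)$ is positive definite, its pairing is nondegenerate, so these properties determine $\lambda^v$ uniquely. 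Property (i) holds by construction.

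The key observation is that pairing a vertex $x\in V(T_v)$ against $r^*$ only involves $x$ and its neighbors. Write $r^*=\rho_u\eta+\zeta$, where $\zeta=\sum_{y\notin V(T_v)}\rho_y\,y$.

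For $x\in V(T_v)$ with $x\neq v$, every neighbor of $x$ lies in $T_v$. Hence $x\cdot\zeta=0$, and so $\rho_u\,\eta\cdot x=r^*\cdot x=0$, using $x\ne r$ (as $r\notin T_v$).

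For $x=v$, the only neighbor of $v$ outside $T_v$ is $u$, so $v\cdot\zeta=-\rho_u$. Since also $v\ne r$, this gives $0=r^*\cdot v=\rho_u\,\eta\cdot v-\rho_u$, hence $\eta\cdot v=1$.

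This establishes \eqref{eq:child_dual}. It is consistent with the remark that $\lambda^v$ is the projection of $-u$: indeed the computation shows $\rho_u\eta$ is the projection of $r^*$ onto $L(T_v)\otimes\bR$.

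For \eqref{e:self-pairing}, expand in coordinates. Since $\lambda^v$ pairs to zero with every basis vertex of $T_v$ except $v$,
\[
|\lambda^v|^2=\lambda^v\cdot\sum_{x\in V(T_v)}\frac{\rho_x}{\rho_u}\,x=\frac{\rho_v}{\rho_u}\,\lambda^v\cdot v=\frac{\rho_v}{\rho_u}.
\]

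The only delicate point is the bookkeeping at the boundary vertex $v$: the single edge $(u,v)$ is what produces the normalization by $\rho_u$, and we need $\rho_u\neq0$, which Lemma~\ref{lem:dual_positivity} supplies. Otherwise the argument is a routine verification.
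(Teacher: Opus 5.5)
Your proof is correct and is essentially the paper's argument: you verify that the truncation of $r^*$ to $V(T_v)$, divided by $\rho_u$, pairs to $\delta_{vx}$ against $V(T_v)$ because only the edge $(u,v)$ joins $T_v$ to the rest of $T$, and you then pair $\lambda^v$ against this coordinate expression. You also usefully make explicit that $\rho_u>0$ (Lemma~\ref{lem:dual_positivity}), which the paper uses only implicitly.

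One inessential correction: your aside that $\rho_u\eta$ is the orthogonal projection of $r^*$ onto $L(T_v)\otimes\bR$ is false. Since $r^*\cdot x=0$ for every $x\in V(T_v)$, that projection is $0$. What is true is that $\rho_u\eta$ is the coordinate truncation of $r^*$, and that it equals the projection of $-\zeta$ (equivalently, of $-\rho_u u$). That is the statement consistent with $\lambda^v$ being the projection of $-u$.
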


\noindent
That is, $\lambda^v$ is obtained by restricting the support of $r^*$ to $V(T_v)$ and dividing by $\rho_u$, and its self-pairing is equal to its coefficient on $v$.

\begin{proof}
Equation \eqref{eq:child_dual} is immediate if $v=r$, so assume $v \ne r$.
The right side of \eqref{eq:child_dual} is contained in $L(T_v) \otimes \bR$, since it is supported on vertices in $T_v$.
For $w \in V(T_v)$, the only pairings on the right side of
\[
0 = r^* \cdot w = \sum_{x \in V} \rho_x (x \cdot w)
\]
which are nonzero occur for $x \in V(T_v)$, if $w \ne v$, and for $x \in V(T_v) \cup \{u\}$, if $w = v$.
We infer that $\sum_{x \in V(T_v)} \rho_x x$ pairs to 0 against each $w \in V(T_v)$, $w \ne v$, and to $\rho_u$ against $v$.
These facts confirm the expression \eqref{eq:child_dual}.

We obtain \eqref{e:self-pairing} simply by pairing $\lambda^v$ against its expression  on the right side of \eqref{eq:child_dual}, invoking $\lambda^v \cdot x = \delta_{vx}$ for all $x \in V(T_v)$.
\end{proof}

For any vertex $v \in V$, the neighbors $w$ of $v$ in $T_v$ are its {\em children}.
We have an orthogonal decomposition
\[
L(T_v) \otimes \bR = \bR \lambda^v  \bigoplus_{w} L(T_w) \otimes \bR,
\]
the direct sum over all children $w$ of $v$.
We obtain a corresponding decomposition of each $\xi\in L(T_v) \otimes \bR$ as
\begin{equation}\label{eq:orthogonaldecomp}
    \xi = t \lambda^v + \sum_w \xi^w.
\end{equation}
The coefficient $t$ on $\lambda^v$ is calculated as follows.
Expressed in the basis $V(T_v)$, $t \lambda^v$ is the only term on the right side with a (possibly) nonzero coefficient on $v$.
The coefficient of $\lambda^v$ on $v$ is $\rho_v/\rho_u$, by Lemma~\ref{lem:expression}, while the coefficient of $\xi$ on $v$ is $\xi_v$, by definition.
This identifies the coefficient on $\lambda^v$ in \eqref{eq:orthogonaldecomp} as
\begin{equation}
    \label{e:t}
t = \frac{\xi_v \rho_u}{\rho_v}.
\end{equation}

\section{Proof of the main theorem}

We prove a sequence of results about irreducibility of elements in positive definite tree lattices, culminating in the proof of Theorem~\ref{thm:longest_dual}.

\begin{lem}\label{lem:lengthbound}
Let $T$ be a weighted tree such that $L(T)$ is positive definite.
Let $r$ be a vertex whose dual takes the form
\[
    r^*=\sum_{x \in V} \rho_x \, x,
\]
with $\rho_x \leq 1$ for all $x \in V$.
Then $r^*$ is irreducible in $L(T)^*$.
\end{lem}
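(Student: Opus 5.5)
The plan is to reduce irreducibility of $r^*$ to a single quadratic inequality on $L(T)^*$, and then prove a stronger inequality by induction over the descendent subtrees $T_v$, using the orthogonal decomposition \eqref{eq:orthogonaldecomp}.

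\emph{Step 1: reformulation.} By Lemma~\ref{lem:expression} with $v=r$, we have $r^*\cdot \xi = \xi_r$ for every $\xi \in L(T)\otimes\bR$, since $r^*\cdot x=\delta_{rx}$. A decomposition $r^* = \xi + (r^*-\xi)$ with both summands nonzero has $\xi\cdot(r^*-\xi) = \xi_r - |\xi|^2$. So irreducibility of $r^*$ in $L(T)^*$ is equivalent to
\[
|\xi|^2 > \xi_r \quad \text{for all } \xi \in L(T)^*,\ \xi \ne 0, r^*.
\]

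\emph{Step 2: an inductive bound.} Fix a vertex $v$ with parent $u$, and put $\rho_u=1$ if $v=r$. I would aim to prove, for every $\xi \in L(T_v)^*$,
\begin{equation*}
|\xi|^2 \ \ge\ \rho_u\,|\xi_v| ,
\end{equation*}
together with a description of the equality cases. At the root this gives $|\xi|^2 \ge |\xi_r| \ge \xi_r$, which is the inequality of Step 1. The bound is consistent with the test case $\xi=\lambda^v$: there $\xi_v = |\xi|^2 = \rho_v/\rho_u$ by \eqref{e:self-pairing}, and the bound reduces to $\rho_v/\rho_u \ge \rho_v$, which is exactly where the hypothesis $\rho_u\le 1$ enters.

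The induction runs from the leaves upward. Decompose $\xi = t\lambda^v + \sum_w \xi^w$ over the children $w$ of $v$, with $t=\xi_v\rho_u/\rho_v$ by \eqref{e:t}. Each $\xi^w$ lies in $L(T_w)^*$, as noted before Lemma~\ref{lem:expression}. Orthogonality then gives
\[
|\xi|^2 = \xi_v^2\,\frac{\rho_u}{\rho_v} + \sum_w |\xi^w|^2 \ \ge\ \xi_v^2\,\frac{\rho_u}{\rho_v} + \rho_v\sum_w |\xi^w_w|,
\]
where the inequality is the inductive hypothesis applied to each child.

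By \eqref{eq:child_dual}, comparing coefficients gives $\xi^w_w = \xi_w - \xi_v\rho_w/\rho_v$. Integrality enters through the integer $\xi\cdot v = \wt(v)\xi_v - \sum_w \xi_w$. Using the relation $\wt(v)\rho_v = \rho_u + \sum_w \rho_w$, which comes from $r^*\cdot v=0$, this becomes
\[
\xi \cdot v \;=\; \frac{\xi_v\rho_u}{\rho_v} - \sum_w \xi^w_w .
\]
If $\xi\cdot v \ne 0$, this should force $\sum_w|\xi^w_w|$ to be large unless $|\xi_v|$ is already large compared to $\rho_v/\rho_u$, and the two terms should then combine to give $\rho_u|\xi_v|$. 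If $\xi\cdot v=0$, the bound should follow from $t^2|\lambda^v|^2 \ge \rho_u|\xi_v|$ or from the children's terms directly.

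\emph{Main obstacle.} I expect the hardest part to be balancing the quadratic term $\xi_v^2\rho_u/\rho_v$ against the linear terms $\rho_v|\xi^w_w|$ in the regime $|\xi_v| < \rho_v/\rho_u$. There the quadratic term alone is too small, and one must extract the deficit from the integrality of $\xi\cdot v$. If the bound as stated fails, the first fix I would try is strengthening the inductive statement, for instance to $|\xi|^2 \ge \rho_u|\xi_v| + c\,|\xi\cdot v|$ for a suitable $c\geq 0$, so that the integer $\xi\cdot v$ is tracked through the induction.

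\emph{Step 3: strictness.} Finally, I would trace the equality cases through the chain of inequalities. This should show that equality $|\xi|^2=\xi_r$ at the root forces every child piece to be either $0$ or an equality case, which propagates to $\xi\in\{0,r^*\}$. That yields the strict inequality required in Step 1.
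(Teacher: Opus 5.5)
Your route is the paper's: the same reformulation, the same inductive claim $|\xi|^2\ge\rho_u|\xi_v|$ on $L(T_v)^*$, and the same orthogonal decomposition. Your coordinate identity $\xi\cdot v=t-\sum_w\xi^w_w$ is exactly the paper's relation $\sum_w\xi^w\cdot\lambda^w=t-\xi\cdot v$, since $\xi^w_w=\xi^w\cdot\lambda^w$. (At $v=r$ the relation $\wt(v)\rho_v=\rho_u+\sum_w\rho_w$ comes from $r^*\cdot r=1$ rather than $r^*\cdot v=0$, but the convention $\rho_u=1$ makes it the same.)

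\textbf{The gap.} The proposal stops at the step that carries the whole proof. With $m:=\xi\cdot v\in\bZ$, you must show
\[
\frac{\rho_v}{\rho_u}t^2+\rho_v\sum_w|\xi^w_w|\ \ge\ \rho_v|t| \qquad\text{whenever } \sum_w\xi^w_w=t-m .
\]
You only assert that this ``should'' hold, and you even consider that it might fail. It does not fail, and no strengthening by a term $c|\xi\cdot v|$ is needed. Three ingredients close it:
\begin{enumerate}
\item The triangle inequality gives $\sum_w|\xi^w_w|\ge|t-m|$.
\item The hypothesis $\rho_u\le 1$, applied in the inductive step at every $v$ (not only in your test case $\xi=\lambda^v$), bounds the quadratic term below by $\rho_v t^2$.
\item For $t\in\bR$ and $m\in\bZ$ one has $t^2+|t-m|\ge|t|$, with equality only if $|t|\in\{0,1\}$. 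If $|t|\ge 1$ the quadratic term alone suffices. If $m=0$ the linear term alone suffices. If $m\neq 0$ and $|t|<1$, then $|t-m|\ge 1-|t|$ and $t^2+(1-|t|)-|t|=(1-|t|)^2\ge 0$.
\end{enumerate}
Together these give $|\xi|^2\ge\rho_v|t|=\rho_u|\xi_v|$. In your case $\xi\cdot v=0$, the first alternative you offer, $t^2|\lambda^v|^2\ge\rho_u|\xi_v|$, is false in general, since it requires $|\xi_v|\ge\rho_v$. Only the children's terms, via the triangle inequality, work there.

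\textbf{Step 3.} This is likewise left as a hope, and the downward propagation you describe is unnecessary. Carry through the induction the equality statement: $|\xi|^2=\rho_u|\xi_v|$ forces $|\xi|^2=0$ or $|\xi_v|=\rho_v/\rho_u$. This follows from the equality case $|t|\in\{0,1\}$ above, because equality in the final bound forces equality in $t^2+|t-m|\ge|t|$ (as $\rho_v>0$). At the root, $|\xi|^2=\xi_r$ with $\xi\neq 0$ then gives $|\xi|^2=\xi_r=\rho_r=|r^*|^2$. Hence $|\xi-r^*|^2=|\xi|^2-2\xi_r+\rho_r=0$, so $\xi=r^*$. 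With these two pieces supplied, your proposal becomes the paper's proof.
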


\begin{proof}
The assertion that $r^*$ is irreducible in $L(T)^*$ is equivalent to the assertion that for any $\xi\in L(T)^*$, we have
\begin{equation}\label{eq:lengthbound}
    |\xi|^2
    \ge
    \xi \cdot r^*,
\end{equation}
with equality if and only if $\xi =0$ or $\xi=r^*$.
Indeed, simply write $r^* = \xi + (r^* - \xi)$ and examine the inner product $\xi \cdot (r^* - \xi)$.
This is what we proceed to show.

We retain the notation $u$ for the parent of $v$, if $v \ne r$, with the convention that $\rho_u = 1$, if $v = r$.
The lemma will be inferred from the following claim.
\begin{claim*}
For any vertex $v$ and for any $\xi\in L(T_v)^*$, we have the lower bound
\begin{equation}
\label{eq:inductivebound}
|\xi|^2
\ge
\rho_u|\xi_v|,
\end{equation}
with equality only if $|\xi|^2=0$ or $|\xi_v|=\frac{\rho_v}{\rho_u}$.
\end{claim*}
Specializing \eqref{eq:inductivebound} to the case $v=r$ yields \eqref{eq:lengthbound}, since then $\rho_u=1$ and $\xi_r=\xi \cdot r^*$.
Equality in \eqref{eq:inductivebound} implies that either $|\xi|^2=0$ or $|\xi|^2=|\xi_r|=\rho_r$.
If $|\xi|^2=0$, then $\xi=0$, since $L(T)$ is positive definite.
If instead $|\xi|^2= \xi \cdot r^* = \rho_r$, then
\[
|\xi- r^*|^2= |\xi|^2 -2 \xi \cdot r^* + \rho_r=0,
\]
implying that $\xi=r^*$.
Thus the claim yields \eqref{eq:lengthbound} and characterizes the cases of equality.

It remains to justify the claim, which we prove by induction on subtrees of $T$.
The base case for proving \eqref{eq:inductivebound} is the case that $T_v$ is a single vertex.
In this case, $\lambda^v=\frac{\rho_v}{\rho_u}v$.
Thus $\xi$ takes the form $\xi=m \lambda^v$ for some $m \in \bZ$, and $\xi_v=\frac{m\rho_v}{\rho_u}$. It follows that
\[
|\xi|^2= (m \lambda^v) \cdot \left(\frac{m\rho_v}{\rho_u}v\right)= \frac{m^2 \rho_v}{\rho_u}\geq |\xi_v|,
\]
with equality only if $m=0$ or $|m|=1$.
Since $0 < \rho_u\leq 1$, this gives \eqref{eq:inductivebound} in the base case.

Now we proceed with the inductive step.
Let $v$ be a vertex, and assume for each child $w$ of $v$ that the claim holds for $w$.

Pair the orthogonal decomposition of $\xi$ from \eqref{eq:orthogonaldecomp} with $v$ to obtain
\begin{equation}
\label{e:xidotv}
\xi\cdot v = t + \sum_w \xi^w\cdot v.
\end{equation}
The orthogonal decomposition \eqref{eq:orthogonaldecomp} and \eqref{e:t} also give
\begin{equation}
\label{e:xi^2}
|\xi|^2= \frac{\rho_v}{\rho_u}t^2 + \sum_w |\xi^w|^2.
\end{equation}
The projection of $v$ to $L(T_w) \otimes \bR$ is equal to $-\lambda^w$, for every child $w$ of $v$.
The pairing of $\xi^w$ with any element $\eta \in L(T) \otimes \bR$ is equal to the pairing of $\xi^w$ with $\eta^w \in L(T_w) \otimes \bR$.
Hence $\xi^w \cdot v=-\xi^w \cdot \lambda^w$ for all children $w$ of $v$.
Inserting this into \eqref{e:xidotv} and rearranging gives
\begin{equation}\label{eq:identity1}
    \sum_w \xi^w \cdot \lambda^w=t-\xi\cdot v.
\end{equation}
Now apply the inductive bound \eqref{eq:inductivebound} to each term $|\xi^w|^2$ appearing in \eqref{e:xi^2} and use the identity \eqref{eq:identity1} to get the bound
\begin{align*}
    |\xi|^2 &\geq  \frac{\rho_v}{\rho_u}t^2 + \sum_w \rho_v |\xi^w\cdot \lambda^w|\\
    &\geq \frac{\rho_v}{\rho_u}t^2 + \rho_v\left|\sum_w  \xi^w\cdot \lambda^w\right|\\
    &= \frac{\rho_v}{\rho_u}t^2 + \rho_v\left|t-\xi\cdot v\right|.
\end{align*}
Since $0 < \rho_u\leq 1$, this gives
\begin{equation}\label{eq:complicatedbound}
|\xi|^2\geq \rho_v(t^2 + \left|t-\xi\cdot v\right|).
\end{equation}
Since $\xi\in L(T_v)^*$, we have $\xi\cdot v \in \bZ$. For any $t\in \bR$ and any $m\in \bZ$, we have  $|t-m| \ge |t|(1-|t|)$, with equality only if $|t|=0$ or $|t|=1$. Thus \eqref{eq:complicatedbound} implies that  
\[
|\xi|^2\geq \rho_v|t|=\rho_u|\xi_v|,
\]
with equality only if $|t|=0$ or $|t|=1$.
This completes the inductive step.
\end{proof}

\begin{lem}\label{lem:irred_criterion}
    Let $T$ be a weighted tree with root $r$ such that $L(T)$ is positive definite.
    If $\rho_r>\rho_x$ for all $x \neq r$, then $r$ is irreducible in $L(T)$.
\end{lem}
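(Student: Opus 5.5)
The plan is to rerun the inductive argument from the proof of Lemma~\ref{lem:lengthbound} on the elements of $L(T)$ itself, after rescaling $r^*$. Fix $r$ as the root and use the standard reformulation: $r$ is irreducible in $L(T)$ if and only if every $y \in L(T)$ satisfies
\[
|y|^2 \ge y \cdot r,
\]
with equality only for $y = 0$ or $y = r$. This follows by writing $r = y + (r-y)$ and expanding $y\cdot(r-y)$. Set
\[
\rho'_x = \rho_x/\rho_r, \qquad \text{so that } \rho'_r = 1 \text{ and } \rho'_x < 1 \text{ for } x \ne r.
\]
For $v \ne r$, the elements $\lambda^v$ and the coefficient $t$ in \eqref{e:t} depend only on the ratios $\rho_x/\rho_u$, which are unchanged by this rescaling. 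The proof of the Claim inside Lemma~\ref{lem:lengthbound} uses only these ratios together with the bound $0<\rho_u\le 1$, and it applies to any $\xi \in L(T_v)^*$. So I would first record that the Claim holds verbatim for every non-root vertex $w$ with $\rho$ replaced by $\rho'$: for $\xi\in L(T_w)^*$,
\[
|\xi|^2\ge \rho'_{u}|\xi_w|,
\]
where $u$ is the parent of $w$, with the same equality cases. The strict inequality $\rho'_x<1$ should make the equality cases even more restrictive than before.

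Next I would treat the root by hand. Take $y\in L(T)\subset L(T)^*$ and decompose it via \eqref{eq:orthogonaldecomp} at $v=r$. Here $\lambda^r=r^*$ and
\[
t = y_r/\rho_r,
\]
where $y_r\in\bZ$ because $y$ lies in the lattice. The projections $y^w$ lie in $L(T_w)^*$, so the rescaled Claim applies to each child $w$ of $r$. Combining it with \eqref{e:xi^2} and \eqref{eq:identity1} exactly as in the inductive step gives
\[
|y|^2 \ \ge\ \frac{y_r^2}{\rho_r} + \Bigl|\frac{y_r}{\rho_r} - y\cdot r\Bigr|,
\]
with two integers $k=y_r$ and $m=y\cdot r$. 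Moreover, equality in the children's bounds forces each $|y^w|^2$ to vanish or to take its extremal value.

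The main obstacle is converting this into $|y|^2\ge m$. The displayed bound alone does not obviously suffice, because $\rho_r$ can exceed $1$. The crude bound $k^2/\rho_r + |k/\rho_r - m| \ge m$ can fail, for instance when $k$ is small and $m$ is large. So more information has to be carried through the root. I would try three things.
\begin{enumerate}[(i)]
\item Do not collapse $\sum_w \rho'_r|y^w\cdot\lambda^w|$ via the triangle inequality. Instead use the integrality of each $y^w\cdot\lambda^w$-type quantity: since $y\in L(T)$, the coefficient $y_w$ is an integer as well.
\item Use the relation $1 = r^*\cdot r = \rho_r\,\wt(r) - \sum_w \rho_w$, which ties $\rho_r$ to $\wt(r)$ and to the children's coefficients.
\item Use the strict inequality $\rho'_w<1$ at the children. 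This should give slack of size $(1-\rho'_w)|y^w\cdot\lambda^w|$, which I expect to absorb the deficit when $|m|$ is large relative to $|k|$.
\end{enumerate}

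With $|y|^2\ge y\cdot r$ in hand, I would finish the equality analysis as in Lemma~\ref{lem:lengthbound}. Equality should force $t\in\{0,1\}$ and all children's terms to be extremal, and from this I would aim to conclude $y=0$ or $y=r$.
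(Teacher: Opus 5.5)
Your setup is sound. The reformulation of irreducibility as $|y|^2\ge y\cdot r$, with equality only for $y\in\{0,r\}$, is correct. The rescaled Claim does hold at every non-root vertex: it is just the Claim of Lemma~\ref{lem:lengthbound} applied to each $T_w$, $w$ a neighbor of $r$, whose root dual $\lambda^w$ has coefficients $\rho_x/\rho_r\le 1$. Your displayed bound $|y|^2\ge k^2/\rho_r+|k/\rho_r-m|$ is also right.

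\textbf{The gap.} The proposal stops at that bound. Neither $|y|^2\ge y\cdot r$ nor the equality characterization is proved, and tactics (i)--(iii) are never carried out. Moreover, your reason for stopping is false. Since $k\in\bZ$ we have $k^2\ge |k|$, and $|k/\rho_r-m|\ge |m|-|k|/\rho_r$, so for every $\rho_r>0$
\[
\frac{k^2}{\rho_r}+\Bigl|\frac{k}{\rho_r}-m\Bigr|\ \ge\ |m|+\frac{|k|(|k|-1)}{\rho_r}\ \ge\ m .
\]
Your example ``$k$ small, $m$ large'' gives exactly $m$: for $k=1$, $m\ge 1/\rho_r$ the left side equals $m$. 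So the inequality needs no further input, and it already holds under the weak hypothesis $\rho_x\le\rho_r$. Tactic (iii) therefore misplaces the role of strictness. Tactic (i) rests on a false premise: $y^w\cdot\lambda^w=y_w-k\rho_w/\rho_r$ is the $w$-coefficient of the projection $y^w$, not the integer $y_w$.

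\textbf{The missing equality analysis.} Strictness is needed only for the equality case, and that case is not a formality. Equality forces $k\in\{0,1\}$, i.e.\ $t\in\{0,1/\rho_r\}$, not $t\in\{0,1\}$; the value $k=-1$ makes the reverse-triangle step strict. Equality also forces each $y^w\in\{0,\pm\lambda^w\}$, by the Claim's equality case plus Cauchy--Schwarz, since $|\lambda^w|^2=\rho_w/\rho_r$. Finally, the triangle-inequality step must be tight, so every $y^w\cdot\lambda^w$ has the sign of $k/\rho_r-m\le 0$. You must track this sign, because nonintegrality alone does not exclude $y^w=\lambda^w$ when $k=1$: $2\lambda^w$ can be integral, e.g.\ $2\lambda^w=w$ when $T_w$ is a single vertex of weight $2$. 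With these facts the argument closes:
\begin{enumerate}[(a)]
\item If $k=0$, then $y^w$ is the restriction of $y$ to $T_w$, so $y^w\in L(T_w)$.
\item If $k=1$, then $y-r=\sum_w(y^w+\lambda^w)$, so $y^w+\lambda^w\in L(T_w)$, and the sign condition rules out $y^w=\lambda^w$.
\end{enumerate}
In both cases, $\lambda^w\notin L(T_w)$ (its coefficients $\rho_x/\rho_r$ lie in $(0,1)$ --- exactly where $\rho_r>\rho_x$ enters) forces $y=0$, respectively $y=r$.

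Completed this way, your route is a valid quantitative alternative to the paper's. The paper uses Lemma~\ref{lem:lengthbound} only qualitatively, as irreducibility of each $\lambda^w$ in $L(T_w)^*$. It handles the root by the orthogonal splitting $r=\tfrac{1}{\rho_r}r^*-\sum_w\lambda^w$ together with $m_ym_z\le 0$ for integers with $m_y+m_z=1$.
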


\begin{proof}
Apply \eqref{eq:child_dual} to each neighbor $v$ of $r$.
Then $\lambda^v \in L(T_v)^*$ fulfills the hypothesis of Lemma~\ref{lem:lengthbound}.
Hence $\lambda^v \in L(T_v)^*$ is irreducible.
Moreover, since the coefficients of $\lambda^v$ are nonintegral, we see that $\lambda^v\not\in L(T_v)$.

Consider an expression $r=y+z$ with $y,z\in L(T)$ and $y\cdot z\geq 0$.
The orthogonal decomposition for $r$ as in \eqref{eq:orthogonaldecomp} takes the form
\[
r= \overline{r} -\sum_v \lambda^v,
\]
where $\overline{r} \in \bR r^*$ and the sum is over all neighbors $v$ of $r$.
The corresponding orthogonal decompositions for $y$ and $z$ take the form
\[
\text{$y=m_y \,\overline{r}+ \sum_v y^v$ \quad and \quad $z=m_z\, \overline{r}+ \sum_v z^v$,}
\]
where $m_y,m_z\in\bZ$ and $y^v,z^v\in L(T_v)^*$ obey $m_y+m_z=1$ and $y^v+z^v=-\lambda^v$ for all $v$.

Since $m_y,m_z \in \bZ$ and $m_y+m_z=1$, we have $m_y m_z\leq 0$.
The irreducibility of $\lambda^v$ in $L(T_v)^*$ implies that $y^v\cdot z^v\leq 0$ for all $v$.
As we assume that
\[
y\cdot z= m_y m_z\norm{\overline{r}} + \sum_v y^v\cdot z^v\geq 0
\]
and $\norm{\overline{r}}>0$, this implies that $m_y m_z=0$ and that $y^v\cdot z^v=0$ for all $v$.
Thus w.l.o.g.~we may assume that $m_y=0$. This implies $y=\sum_v y^v$ and hence that $y^v \in L(T_v)$ for all $v$.
However, the irreducibility of the $\lambda^v\in L(T_v)^*$ implies that for all $v$ we have $y^v=-\lambda^v$ or $y^v=0$.
As $\lambda^v \not\in L(T_v)$ for all $v$, this implies that $y^v=0$ for all $v$.
Thus we have that $y=0$ and consequently that $r$ is irreducible.
\end{proof}

\begin{proof}[Proof of Theorem~\ref{thm:longest_dual}]
    Let $v$ be a vertex such that $|v^*|^2\geq |x^*|^2$ for all vertices $x\in V$.
    Since $L(T)$ is positive definite, for any $x\neq v$, we have that
    \[
    0<|v^*-x^*|^2= |v^*|^2-2v^*\cdot x^* + |x^*|^2\leq 2|v^*|^2-2v^*\cdot x^*.
    \]
    In particular, $v^*\cdot x^*<|v^*|^2$. However, 
    \[
    v^*=\sum_{x\in V} (v^*\cdot x^*) \, x.
    \]
    Thus Lemma~\ref{lem:irred_criterion} applies to show that $v$ is irreducible.
\end{proof}

\bibliographystyle{amsalpha2}

\bibliography{references}

\end{document}